\begin{document}

\newtheorem{Dfn}{Definition}
\newtheorem{Theo}{Theorem}%[section]
\newtheorem{Lemma}[Theo]{Lemma}
\newtheorem{Prop}[Theo]{Proposition}
\newtheorem{Coro}[Theo]{Corollary}
\newcommand{\Pro}{\noindent{\em Proof. }}
\newcommand{\Rem}{\noindent{\em Remark. }}

\title{Loops as sections in compact Lie groups}
\author{\'Agota Figula and Karl Strambach}
\date{}
\maketitle
\footnotetext[1]{This paper was supported by the J\'anos Bolyai Research Fellowship and
the European Union's Seventh Framework Programme (FP7/2007-2013) under grant agreement no. 317721, no. 318202.}
\footnotetext[2]{Key words and phrases: compact connected loops, compact Lie groups, Bruschlinsky group, sharply transitive sections in compact Lie groups.}
\footnotetext[3]{2010 Mathematics Subject Classification: 22C05, 57T15, 57T20, 20N05}

\begin{abstract}
\noindent
We prove that there does not exist any connected topological proper loop homeomorphic to a quasi-simple Lie group and having a compact Lie group as the group topologically generated by its left translations. Moreover, any connected topological loop homeomorphic to the $7$-sphere and  having a compact Lie group as the group of its left translations  is classical. We give a particular simple general construction for proper loops such that the compact group of their left translations is direct product of at least $3$ factors.
\end{abstract}

\centerline{\bf 1. Introduction}

\bigskip
H. Scheerer has clarified in \cite{scheerer} for which compact connected Lie groups $G$ and for which closed subgroups $H$ the natural projection $G \to G/H$ has a continuous section $\sigma $. If $G$ is a semisimple compact Lie group, then the image $\sigma (G/H)$ is not homeomorphic to a Lie group precisely if $G$ contains a factor locally isomorphic to $PSO_8(\mathbb R)$. This is due to the fact that the group topologically generated by the left translations of the octonions of norm $1$ is the group  $SO_8(\mathbb R)$. Hence any compact connected topological loop whose group topologically generated by the left translations is a compact Lie group is itself homeomorphic to a compact Lie group

But it remained an open problem for which $\sigma$ the image $\sigma (G/H)$ determines a loop. This is the case if $\sigma (G/H)$ acts sharply transitively on $G/H$ what means that for given cosets $g_1 H$, $g_2 H$ there exists precisely one $z \in \sigma (G/H)$ such that the equation
$z g_1 H=g_2 H$ holds. Continuous sections $\sigma $ with this property (they are called sharply transitive sections) correspond to topological loops
$(L, \ast)$ (cf. \cite{loops}, Proposition 1.21, p. 29) realized on $G/H$ with respect to the multiplication
$x H \ast y H=\sigma (x H) y H$.  The group topologically generated by the left translations of $(L, \ast)$ coincides with $G$.

There are many examples of compact connected loops having a non-simple compact connected Lie group as the group topologically generated by their left translations (cf. \cite{loops}, Theorem 16.7, p. 198 and Section 14.3, pp. 170-173). A particular simple general construction for proper loops such that the group generated by their left translations is the direct product of at least three factors is given in Section $3$.

In contrast to this in this paper we prove that any connected topological loop $L$ homeomorphic to a quasi-simple Lie group $G$ and having a compact Lie group as the group topologically generated by its left translations must coincide with $G$ (cf. Theorem \ref{quasisimple}). Similarly, any connected topological loop $L$ homeomorphic to the $7$-sphere and  having a compact Lie group as the group topological generated by its left translations  is either the Moufang loop
$\mathcal O$ of octonions of norm $1$ or the factor loop $\mathcal{O}/Z$, where $Z$ is the centre of $\mathcal O$ (cf. Theorem \ref{7sphere}).

\bigskip
\noindent
\centerline{\bf 2. Prerequisites}

\bigskip
A set $L$ with a binary operation $(x,y) \mapsto x \cdot y$ is called a loop
if there exists an element $e \in L$ such that $x=e \cdot x=x \cdot e$ holds
for all $x \in L$ and the equations $a \cdot y=b$ and $x \cdot a=b$ have
precisely one solution which we denote by $y=a \backslash b$ and $x=b/a$.
The left translation $\lambda _a: y \mapsto a \cdot y :L \to L$ is a bijection of $L$ for any $a \in L$.

The kernel of a homomorphism $\alpha :(L, \circ ) \to (L', \ast )$
of a loop $L$ into a loop $L'$ is a normal subloop $N$ of $L$, i.e.
a subloop of $L$ such that
\begin{equation}  x \circ N=N \circ x, \ \ (x \circ N) \circ y=x \circ (N \circ y), \ \
x \circ (y \circ N)=(x \circ y) \circ N \nonumber \end{equation}
holds for all $x,y \in L$. A loop $(L, \cdot)$ is a product of two subloops $L_1$ and $L_2$ if any element $x$ of $L$ has a representation
$x=a \cdot b$, $a \in L_1$ and $b \in L_2$. A loop $(L, \cdot )$ is called a Moufang loop if for all $x, y, z \in L$ the identity
$(x \cdot y) \cdot (z \cdot x)=[x \cdot (y \cdot z)] \cdot x$
holds.

Let $L$ be a topological space. Then $(L, \cdot )$ is a  topological loop if  the
maps  $(x,y) \mapsto x \cdot y$, $(x,y) \mapsto x \backslash y$, $(x,y) \mapsto y / x: L^2 \to L$ are continuous. If only the multiplication and the left division are continuous, then the loop $L$  is called almost topological. An almost topological loop $L$ is a topological loop if the group generated by the left translations of $L$ is a connected Lie group (see \cite{loops},
Corollary 1.22). A loop $L$ is almost differentiable if $L$ is a differentiable manifold and the multiplication and the left division are  differentiable.

Let $G$ be a compact connected Lie group, let $H$ be a connected closed subgroup of $G$ containing no non-trivial normal subgroup of $G$ and
$G/H= \{ x H, x \in G \}$. Let $\sigma: G/H \to G$ be a continuous map with $\sigma (H)=1 \in G$ such that the set $\sigma (G/H)$ is a system of representatives for $G/H$ which generates $G$ and operates sharply transitively on $G/H$ which means that to any $x H$ and $y H$ there exists precisely one $z \in \sigma (G/H)$ with $z x H=y H$. Then the multiplication on the factor space $G/H$ given by $x H \cdot yH= \sigma(x H) yH$, respectively the multiplication on the set $\sigma (G/H)$ given by $x \cdot y=\sigma (x y H)$ yields a compact topological loop having $G$ as the group topologically generated by the left translations $x H \mapsto \sigma( x H)$, respectively $x \mapsto \sigma( x H)$.

If $L$ is a compact topological loop such that the group topologically generated by all left translations of $L$ is a compact connected Lie group, then the set $\{ \lambda _a, a \in L\}$ forms a sharply transitive section $\sigma :G/G_e \to G$ with $\sigma (\lambda _a G_e)= \lambda _a$, where $G_e$ is the stabilizer of $e \in L$ in $G$.

If the section $\sigma :G/H \to G$ is differentiable, then the loop $L$ is almost differentiable.

A  quasi-simple compact Lie group is a compact  Lie group $G$ containing a normal finite
central subgroup $N$ such that the factor group $G/N$ is simple. A semisimple connected compact
group $G$ is a Lie group containing a normal finite central subgroup $N$ such that the factor group $G/N$ is a direct product of simple Lie groups. A connected compact Lie group is an almost direct product of  compact semisimple Lie groups if its universal covering (cf. \cite{neeb}, Appendix A) is a direct product of simply connected quasi-simple Lie groups.

For connected and locally simply connected topological loops there exist universal covering loops (cf. \cite{hofmann2}, \cite{hofmann3}, \cite{hofmann}, IX.1). This yields the following lemma:

\begin{Lemma} \label{simplyconnected}
The universal covering loop $\tilde{L}$ of a connected and locally simply connected topological loop $L$ is simply connected and $L$ is isomorphic to a factor loop $\tilde{L}/N$, where $N$ is a central subgroup of $\tilde{L}$.
\end{Lemma}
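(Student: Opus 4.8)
The plan is to deduce the lemma from the existence of the universal covering loop $\tilde L$, which comes equipped with a covering homomorphism $p\colon\tilde L\to L$ by the theory of topological covering loops cited above, and then to translate the classical covering-group argument into the loop setting.

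The first two assertions are almost immediate. Simple connectedness of $\tilde L$ is part of (indeed, the defining feature of) the universal covering construction for a connected, locally simply connected — hence locally path-connected — space, exactly as for topological groups. Since $p$ is a covering map, its fibre $N:=p^{-1}(e)=\ker p$ is a discrete subset of $\tilde L$; and since $p$ is a loop homomorphism, $N$ is a normal subloop of $\tilde L$ in the sense recalled in Section 2. So the real content is to show that this discrete normal subloop $N$ is contained in the centre of $\tilde L$, after which $L\cong\tilde L/N$ follows from the homomorphism theorem for loops (note that $p$ is onto because $L$ is connected).

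For the centrality I would systematically use that $\tilde L$ is connected while $N$ is discrete, so that every continuous map from $\tilde L$ (or from $\tilde L\times\tilde L$) into $N$ is constant; equivalently, two continuous lifts of one and the same map into $L$ through the covering $p$ which agree at a single point must coincide. Thus, given $n\in N$, the translations $x\mapsto n\cdot x$ and $x\mapsto x\cdot n$ of $\tilde L$ are both continuous lifts of the map $p$ itself through $p$, since $p(n\cdot x)=p(x\cdot n)=p(x)$, and they take the common value $n$ at $x=e$; hence $n\cdot x=x\cdot n$ for all $x\in\tilde L$. Likewise, for a fixed $b\in\tilde L$ the maps $x\mapsto(x\cdot n)\cdot b$ and $x\mapsto x\cdot(n\cdot b)$ are lifts through $p$ of $\bar x\mapsto\bar x\cdot p(b)$ agreeing at $x=e$, so $(x\cdot n)\cdot b=x\cdot(n\cdot b)$ for all $x$; fixing $a$ instead and letting $x$ vary gives $(a\cdot x)\cdot n=a\cdot(x\cdot n)$ in the same way; and the remaining identity $(n\cdot a)\cdot b=n\cdot(a\cdot b)$ then drops out by combining these two with the commutativity already established. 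Therefore every $n\in N$ lies in the nucleus of $\tilde L$ and commutes with all its elements, i.e. $N\subseteq Z(\tilde L)$, so $N$ is a central — in particular abelian — subgroup, and $L\cong\tilde L/N$ with $N$ central, as claimed.

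The step I expect to need the most care is the centrality argument. One has to make sure that the comparison maps used there genuinely take values in the discrete set $N$; this rests on being able to cancel in $L$, so that applying $p$ to the "difference" of two elements lying in the same fibre yields $e$. One must also use that $\tilde L$ is connected (and Hausdorff) so that the constancy/lift-uniqueness principle applies. Once these routine points are settled, the rest is the standard covering-group reasoning rephrased for loops.
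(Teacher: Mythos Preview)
Your argument is correct. The lifting/connectedness trick you use to force $n\cdot x=x\cdot n$ and the three associator identities is exactly the loop analogue of the classical ``a discrete normal subgroup of a connected topological group is central'' argument, and it goes through because $p$ is a loop homomorphism and $\tilde L$ is connected; the derivation of $(n\cdot a)\cdot b=n\cdot(a\cdot b)$ from the other identities is also fine. One small remark: your closing comment about ``cancelling in $L$'' is not actually needed in the way you phrased the proof --- you never form a quotient map into $N$, you only invoke uniqueness of lifts through the covering $p$, which uses connectedness of $\tilde L$ and nothing about division in $L$.

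As for comparison with the paper: the paper does not prove this lemma at all. It simply records it as a consequence of the existing covering theory for topological loops (the references \cite{hofmann2}, \cite{hofmann3}, \cite{hofmann}, IX.1) and moves on. So your write-up supplies the details the authors chose to delegate to the literature; the content is the same, but you have made the centrality step explicit rather than absorbing it into a citation.
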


\bigskip
\noindent
\centerline{\bf 3. Loops corresponding to products of groups}

\bigskip
\noindent
Let $G = K \times P \times S$ be a group, where $K$ is a group, $P$ is a non-abelian group, $g: K \to S$ is a map which is not a homomorphism such that $g(1) = 1$, the set $\{(k,1,g(k)); k \in K\}$ generates the group $K \times \{ 1 \} \times S$ and $S$ is isomorphic to a subgroup of $P$ having with the centre of $P$ trivial intersection.
Hence there is a monomorphism  $\varphi $ from $S$ into $P$ and  we may assume $H=\{(1,x,x); \ x \in S \}$. Moreover, we put
$M = \{(k, l g(k), g(k)); k \in K, \  l \in P \}$.

\noindent
Every element $(a,b,c) \in G$ may be uniquely decomposed as
$(a,b,c)=(a,b c^{-1},1)(1,c,c)$ with $(1,c,c) \in H$. Since for all $a \in K, b \in P$ there are unique elements $m=(a,b g(a),g(a)) \in M$ and $h=(1,g(a)^{-1},g(a)^{-1}) \in H$ such that $(a,b,1)=m h$ the set $M$ determines the section
$\sigma :G/H \to G; (x,y,1)H \mapsto \sigma ((x,y,1)H)=(x,y g(x),g(x))$. Since for given $a_1, a_2 \in K, b_1, b_2 \in P$  the equation
\[ (k, l g(k), g(k))(a_1,b_1,1)=(a_2,b_2,1)(1,d,d) \]
has the unique solution \[k=a_2 a_1^{-1},\  l=b_2 g(a_2 a_1^{-1}) b_1^{-1} g(a_2 a_1^{-1})^{-1} \]
with  $d=g(a_2 a_1^{-1}) \in P$, the set $M$ acts sharply
transitively on the left cosets $\{ (a,b,1) H;\ a \in K, b \in P \}$.
Since the group $H$ contains no normal subgroup of $G$ the map $\sigma $ corresponds to a  loop $L$ having the
group $G$ as the group generated by its left translations, the subgroup $H$ as the stabilizer
of $e \in L$ and the set $M$ as the set of all left translations of $L$.
The multiplication of $L$ can be defined on the set $\{ (a,b,1) H;\ a \in K, b \in P \}$ by
\begin{equation} \label{uequ1} (a_1,b_1,1) H \ast (a_2, b_2,1) H=(a_1 a_2, b_1 g(a_1) b_2 g(a_1)^{-1}, 1) H   \end{equation}
and $G = K \times P \times S$ is the group generated by the left translations of $(L, \ast )$.
Since $(1,l_1,1) H \ast (1,l_2,1) H= (1, l_1 l_2,1) H$
for all $l_1,l_2 \in P$ holds $(N, \ast )=(\{(1, l, 1) H; l \in P \}, \ast )$ is a subgroup of $(L, \ast )$ isomorphic to $P$.  
As $G$ is the direct product $G=G_1 \times G_2$ with $G_1=K \times \{ 1 \} \times S$ and $G_2=\{ 1 \} \times P \times \{ 1 \}$ and 
$\sigma (G/H)=M=M_1 \times G_2$ with $M_1=\{(k, 1, g(k)); k \in K\} \subset G_1$ it follows from Proposition 2.4 in \cite{loops}, p. 44, that the group $(N, \ast )$ is normal in the loop $(L, \ast )$.
Moreover, for all $k_1, k_2 \in K$ one has
$(k_1,1,1) H \ast (k_2,1,1) H= (k_1 k_2, 1, 1) H$.  Hence
$(K, \ast)=(\{(k,1,1) H; k \in K \}, \ast )$ is a subgroup of $(L, \ast )$ isomorphic to $K$.
Therefore the loop $(L, \ast )$ defined by (\ref{uequ1}) is a semidirect product of the normal subgroup $(N, \ast )$
by the subgroup $(K, \ast )$.

\noindent 
The loop $L$ is a group if the multiplication (\ref{uequ1}) is associative, i.e.
\begin{equation} ((a_1,b_1,1) H \ast (a_2,b_2,1) H) \ast (a_3,b_3,1) H = \nonumber \end{equation}
\begin{equation} (a_1,b_1,1) H \ast ((a_2,b_2,1) H \ast (a_3,b_3,1) H). \nonumber \end{equation}
This identity holds if and only if for all $a_i \in K$ and $b_i \in P$ one has
\begin{equation} \label{associative} (a_1 a_2 a_3, b_1 g(a_1) b_2 g(a_1)^{-1}g(a_1 a_2) b_3 g(a_1 a_2)^{-1}, 1)H= \nonumber \end{equation}
\begin{equation} (a_1 a_2 a_3,  b_1 g(a_1) b_2 g(a_2) b_3 g(a_2)^{-1} g(a_1)^{-1}, 1)H \nonumber \end{equation}
or equivalently $g(a_1)g(a_2) b_3 g(a_2)^{-1}g(a_1)^{-1} = g(a_1 a_2) b_3 g(a_1 a_2)^{-1}$.
This yields a contradiction since g is not a homomorphism. Therefore $L$ is a proper loop.

\smallskip
\noindent
If $K$ and $P$ are connected Lie groups and the
function  $g$ is  continuous, then $L$ has continuous multiplication and left division (cf. \cite{loops}, p. 29). Hence $L$ is a connected locally compact topological proper loop. If $g$ is differentiable, then $L$ is a connected almost differentiable proper loop (cf. \cite{loops}, p. 32).

\smallskip
\noindent
The constructed examples show the following

\smallskip
\noindent
\Rem
There exist proper loops with normal connected subgroups having  a compact connected Lie group $G$  as the group topologically generated by the left translations if 
$G = G_1 \times G_2 \times  G_3$, where $G_2$ is not a torus group and $G_3$ is isomorphic to a subgroup of $G_2$ having with the centre of $G_2$ trivial intersection.

\smallskip
\noindent
The aim of the paper is to demonstrate that this is a typical situation for connected compact Lie groups $G$ being groups generated by the left translations of a proper loop.

\bigskip
\centerline{\bf 4. Results}

\begin{Lemma}  \label{1dimcompact} Any one-dimensional connected topological loop having a compact Lie group as the group topologically generated by its left translations is the orthogonal group $SO_{2}(\mathbb R)$. \end{Lemma}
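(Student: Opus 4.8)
The plan is to identify the loop $L$ with the homogeneous space $G/G_e$ and then to determine the pair $(G,G_e)$ from the fact that $G$ is a compact connected Lie group acting effectively and transitively on a one-dimensional space.

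First I would collect the structural facts recalled in Section 2. The group $G$ topologically generated by the left translations of a connected loop is connected and, by hypothesis, a compact Lie group; $L$ is homeomorphic to $G/G_e$, where $G_e$ is the closed stabiliser of $e$, and $G_e$ contains no non-trivial normal subgroup of $G$, so $G$ acts effectively on $G/G_e$. Since $G$ is a compact Lie group, $G/G_e$ is a compact smooth manifold; being also connected and one-dimensional, it is homeomorphic to the circle $S^1$. Hence $G$ is a compact connected Lie group acting effectively and transitively on $S^1$.

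Next I would invoke the classical description of such actions. By a theorem of Ker\'ekj\'art\'o, a compact group acting effectively by homeomorphisms of $S^1$ is conjugate in $\mathrm{Homeo}(S^1)$ to a subgroup of $O_2(\mathbb R)$; alternatively, one argues directly that the identity component $(G_e)_0$ acts trivially on the tangent line $T_{eG_e}(G/G_e)\cong\mathbb R$ --- because $\mathbb R^\times$ has no non-trivial compact connected subgroup --- so by linearisation of the isotropy action it fixes a neighbourhood of $eG_e$, hence fixes the whole connected manifold $S^1$, and must be trivial by effectiveness. Either way $\dim G_e=0$, so $\dim G=1$, and a compact connected one-dimensional Lie group is isomorphic to $SO_2(\mathbb R)$; since that group is abelian, the finite subgroup $G_e$ is normal in $G$, forcing $G_e=1$.

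Finally, with $G_e=1$ the section $\sigma:G/G_e=G\to G$ must be the identity map, because $\sigma(g)$ lies in the coset $gG_e=\{g\}$; hence the loop multiplication $xG_e\cdot yG_e=\sigma(xG_e)\,yG_e$ coincides with the group multiplication of $G$, and therefore $L=SO_2(\mathbb R)$. The only genuine input is the description of compact transformation groups of the circle (equivalently, the linearisation of the isotropy subgroup's action near a fixed point); everything else is bookkeeping with the coset model of $L$.
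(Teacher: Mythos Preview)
Your argument is correct. The paper itself does not give a proof but simply cites Proposition~18.2 of \cite{loops}, so there is nothing to compare at the level of technique; your write-up supplies a self-contained justification where the paper defers to the monograph.

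The line of reasoning you chose---identify $L$ with $G/G_e$, use compactness of the isotropy to force $(G_e)_0$ to act trivially on the one-dimensional tangent space (or invoke K\'er\'ekj\'art\'o), conclude $\dim G=1$ and hence $G\cong SO_2(\mathbb R)$, then use abelianness to kill $G_e$ via the ``no normal subgroup in the stabiliser'' condition---is exactly the natural route and each step is sound. The only places where a careful reader might want an extra word are: (i) that the orbit map $G/G_e\to L$ is a homeomorphism uses compactness of $G$ together with the standing assumption that $L$ is Hausdorff; and (ii) your linearisation step tacitly uses Bochner's theorem (a compact group action near a fixed point is locally conjugate to its isotropy representation), which is standard but worth naming. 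Neither is a gap, just a matter of attribution.
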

\begin{proof} It is proved in \cite{loops}, Proposition 18.2. \end{proof}

\begin{Lemma} \label{lemma3} Let $G$ be a connected  semisimple compact Lie group topologically generated by the left translations of a compact simply connected loop which is homeomorphic to a semisimple Lie group $K_1$. Let $H$ be the stabilizer of $e \in L$. Then one has
$H=(H_{1}, \rho (H_{1}))=\{ (x, \rho(x)), x \in H_1 \}$ and $G= K_{1} \times  \rho (H_{1})$, where $\rho$ is a monomorphism and $H_1$ is a subgroup of $K_1$. Moreover, $H_1$ has with the centre $Z_1$ of $K_1$ a trivial intersection.
\end{Lemma}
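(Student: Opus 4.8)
The plan is to read off the topological consequences of the hypotheses, to lift the whole configuration to the universal covering of $G$ where it will take a graph form, and then to use the sharp transitivity of the section in order to see that $K_1$ is a genuine direct factor of $G$. First the topological reductions. Since $L$ is simply connected and homeomorphic to $K_1$, the compact semisimple group $K_1$ is simply connected, so $\pi _1(K_1)=\pi _2(K_1)=1$; in particular $G/H\cong L$ is simply connected. From the exact homotopy sequence of the bundle $H\to G\to G/H$ one gets that $H$ is connected and that $\pi _1(H)\cong \pi _1(G)$, hence $\pi _1(H)$ is finite ($G$ being semisimple) and $H$ is a semisimple compact connected Lie group. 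As $G\to G/H$ is a principal $H$-bundle with the continuous section $\sigma $, the map $(xH,h)\mapsto \sigma (xH)\,h$ is a continuous bijection of the compact Hausdorff spaces $G/H\times H$ and $G$, hence a homeomorphism; thus $G$ is homeomorphic to $K_1\times H$, and since homeomorphic compact connected Lie groups are locally isomorphic, $\mathfrak g\cong \mathfrak k_1\oplus \mathfrak h$.

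Now let $\pi :\widetilde G\to G$ be the universal covering; it is a direct product of simply connected quasi-simple compact groups, and by the above together with the simple connectivity of $K_1$ we may write $\widetilde G=K_1\times B$, where $B$ is the simply connected cover of $H$. The preimage $\widetilde H:=\pi ^{-1}(H)$ is connected and, by the homotopy sequence of $\widetilde H\to \widetilde G\to G/H$, simply connected, so it is a closed subgroup covering $H$; its dimension is $\dim B$ and its projection to the factor $B$ is surjective, hence an isomorphism, so $\widetilde H=\{(\psi (b),b):b\in B\}$ is the graph of a homomorphism $\psi :B\to K_1$, and we set $H_1:=\psi (B)\le K_1$. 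Since $K_1$ is simply connected, $\sigma $ lifts to a continuous map $\widetilde\sigma :K_1\cong G/H\to \widetilde G$ with $\pi \widetilde\sigma =\sigma $ and $\widetilde\sigma (e)=1$, and one checks that $\widetilde\sigma $ is again a sharply transitive continuous section of $\widetilde G/\widetilde H$ whose image generates $\widetilde G$. As $G$ acts faithfully on $L$, the subgroup $H$ contains no nontrivial normal subgroup of $G$; hence the largest normal subgroup of $\widetilde G$ contained in $\widetilde H$ is $\ker \pi $, and since $\ker \pi \subseteq \widetilde H\cap Z(\widetilde G)$ we get $\ker \pi =\widetilde H\cap Z(\widetilde G)=\{(\psi (b),b):b\in Z(B),\ \psi (b)\in Z_1\}$; in particular $\{1\}\times \ker \psi \subseteq \ker \pi $, so $\ker \psi $ is finite and central in $B$.

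It remains to prove $H_1\cap Z_1=1$. Granted this, each $b\in Z(B)$ with $\psi (b)\in Z_1$ satisfies $\psi (b)\in Z(H_1)\cap Z_1=H_1\cap Z_1=1$, so $\ker \pi =\{1\}\times \ker \psi $; then $G=\widetilde G/\ker \pi =K_1\times (B/\ker \psi )$, and with $G_2:=B/\ker \psi $ and $\rho :H_1\to G_2$ the isomorphism induced by $\psi $ one obtains $H=\pi (\widetilde H)=\{(x,\rho (x)):x\in H_1\}$ with $\rho $ a monomorphism; moreover $\{(x,\rho (x)):x\in H_1\cap Z_1\}$ is a central, hence normal, subgroup of $G$ contained in $H$ and therefore trivial, which recovers $H_1\cap Z_1=1$. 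The equality $H_1\cap Z_1=1$ is the heart of the matter and does not follow from the homeomorphism $G/H\cong K_1$ nor even from faithfulness of the action -- for instance $SO_4(\mathbb R)$ acts faithfully and transitively on $S^3$ with stabilizer $SO_3(\mathbb R)$, yet $SO_4(\mathbb R)$ is directly indecomposable and has no direct factor homeomorphic to $S^3$ -- so one must use the sharp transitivity of $\widetilde\sigma $: exploiting that $\widetilde\sigma (K_1)$ is a simply transitive transversal of $\widetilde H$ generating $\widetilde G=K_1\times B$, one shows that $\ker \pi \subseteq \{1\}\times B$, i.e.\ $H_1\cap Z_1=1$. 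This last implication is the main obstacle of the proof; all the rest is routine bookkeeping with coverings and centres.
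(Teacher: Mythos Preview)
Your sketch reconstructs much of the structural analysis by hand: you pass to the universal cover $\widetilde G=K_1\times B$, realise $\widetilde H$ as the graph of a homomorphism $\psi :B\to K_1$, and reduce the claim $G=K_1\times \rho (H_1)$ to the assertion $\ker \pi \subseteq \{1\}\times B$, equivalently $H_1\cap Z_1=1$. This is a reasonable and instructive route, but the proof is incomplete precisely at the point you yourself flag as ``the main obstacle'': you assert that sharp transitivity of $\widetilde\sigma$ forces $\ker\pi\subseteq\{1\}\times B$, yet no argument is given. Your $SO_4(\mathbb R)$ example is well chosen---it shows that neither the existence of a section nor faithfulness of the action suffices---but it equally shows that the missing step is substantive, not a routine verification. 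Without it the conclusion $G=K_1\times\rho(H_1)$ (as a direct product of \emph{groups}, not merely of spaces) remains unestablished. A secondary point: the claim that the projection of $\widetilde H$ onto $B$ is surjective also deserves justification; it amounts to $\mathfrak h\cap\mathfrak k_1=0$ inside $\mathfrak g$, which is not automatic and depends on how one chooses the ideal realising $\mathfrak k_1$ in $\mathfrak g$.

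The paper takes a much shorter path: it invokes Scheerer's classification \cite{scheerer} (equivalently Theorem~16.1 of \cite{loops}) as a black box to obtain directly that $G=K_1\times K_2$ as Lie groups with $H=\{(x,\rho(x)):x\in H_1\}$. Once this group-theoretic direct product is in hand, the remaining assertion $H_1\cap Z_1=1$ is a two-line consequence of $H$ being core-free (so $H\cap Z(G)=1$): if $z\in H_1\cap Z_1$ then $z$ centralises $H_1$, whence $\rho(z)\in Z(K_2)$ and $(z,\rho(z))\in H\cap Z(G)=1$. In other words, the paper reverses your order of deduction---first the direct product via Scheerer, then the centre condition---and thereby sidesteps exactly the obstacle that blocks your argument. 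If you want to keep a self-contained approach you must supply a genuine proof of the step you left open; otherwise, citing the Scheerer/\cite{loops} structure theorem is the efficient way through.
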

\begin{proof} Since $G/H$ is homeomorphic to $K_1$ the group $G$ is homeomorphic to $K_1 \times H$. According to \cite{scheerer} or to Theorem 16.1 in \cite{loops}, p. 195, the group $G$ has the form $G=K_1 \times K_2$ and the stabilizer $H$ of $e \in L$ is $H=(H_{1}, \rho (H_{1}))$, where $K_2$ is a Lie group isomorphic to $H$ and 
$\rho $ is a monomorphism. From this it follows that $G=K_1 \times \rho (H_{1})$.
If $Z_1 \cap H_1 \neq 1$, then $H$ has  with the centre $Z$ of $G$ a non-trivial intersection. But this is a contradiction to the fact that
 $Z \cap H = 1$.
\end{proof}

\smallskip
\noindent
Lemma \ref{lemma3} yields

\begin{Coro} \label{coro1} Let $G=K_1 \times K_2$ be a compact semisimple Lie group such that $K_1$ is semisimple and let $H$ be a subgroup of $G$ such that $H=(H_1, \rho (H_{1}))$, where $H_1$ is a subgroup of $K_1$ and $\rho $ is a monomorphism. If $K_1$ has a non-trivial centre $Z_1$ and
$H_1 \cap Z_1 \neq 1$, then there exists no proper loop $L$ homeomorphic to $K_1$ such that $G$ is the group topologically generated by the left translations of $L$ and $H$ is the stabilizer of $e \in L$.
\end{Coro}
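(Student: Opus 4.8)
The plan is to argue by contradiction, reproducing in the possibly non-simply-connected situation the mechanism already used inside the proof of Lemma \ref{lemma3}: if a loop with the prescribed data existed, then its point stabilizer $H$ would be forced to contain a non-trivial central, hence normal, subgroup of $G$, which the stabilizer of a point of a loop cannot do. So suppose, for a contradiction, that such a proper loop $L$ exists. Then $G$ acts faithfully on $L$ by its left translations $\lambda_a$, the subgroup $H$ is the stabilizer $G_e$ of $e$, and, as recalled in Section~2, $H$ contains no non-trivial normal subgroup of $G$: if $M$ is a normal subgroup of $G$ with $M \le H$, then $M$ fixes $e$ and, being normal, also fixes every $\lambda_a(e)=a$, hence acts trivially on $L$, so $M=1$. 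It therefore suffices to exhibit inside $H$ a non-trivial subgroup which is normal in $G$.

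The key step is the equality $\rho(H_1)=K_2$. Since $L$ is homeomorphic to the homogeneous space $G/H$ and also to $K_1$, while $G=K_1 \times K_2$, we get $\dim H=\dim G-\dim(G/H)=\dim G-\dim K_1=\dim K_2$. As $H=G_e$ is closed and $G$ is compact, $H$ is a compact Lie group, and the restriction to $H$ of the projection $G \to K_1$ is a continuous bijective homomorphism onto $H_1$, hence an isomorphism; thus $H_1$ is a compact subgroup of $K_1$ with $\dim H_1=\dim H=\dim K_2$. Since $\rho:H_1 \to K_2$ is a continuous monomorphism, $\rho(H_1)$ is a compact, hence closed, subgroup of $K_2$ of dimension $\dim K_2$, and as $K_2$ is connected this forces $\rho(H_1)=K_2$. (In the simply connected case this is exactly the conclusion $G=K_1 \times \rho(H_1)$ of Lemma \ref{lemma3}; here we deduce it from the dimension count, so the simply connectedness hypothesis of Lemma \ref{lemma3} is not needed.)

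Now choose $1 \neq z \in H_1 \cap Z_1$. Since $z$ lies in the centre $Z_1$ of $K_1$, it lies in the centre of the subgroup $H_1$, and therefore $\rho(z)$ lies in the centre of $\rho(H_1)=K_2$; hence $(z,\rho(z)) \in Z_1 \times Z(K_2)=Z(G)$. As also $(z,\rho(z)) \in H$, the cyclic group generated by $(z,\rho(z))$ is a subgroup of $Z(G) \cap H$; being contained in the centre of $G$ it is normal in $G$, and it is non-trivial because $z \neq 1$. This contradicts the first paragraph, and the corollary follows.

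The one delicate point is the equality $\rho(H_1)=K_2$: it rests on $H$, hence $H_1$, being closed, on $\rho$ being continuous, and on $\dim(G/H)=\dim K_1$, all of which come from $L$ being homeomorphic to $K_1$ together with the loop correspondence of Section~2. An alternative route, closer to the statement that Lemma \ref{lemma3} yields the corollary, would be to pass first to the universal covering loop $\tilde L$ of $L$ (Lemma \ref{simplyconnected}), which is simply connected and homeomorphic to the universal cover of $K_1$ --- still compact, since $K_1$ is semisimple --- to apply Lemma \ref{lemma3} to $\tilde L$, obtaining $\tilde H_1 \cap Z(\tilde K_1)=1$, and then to lift a non-trivial element of $H_1 \cap Z_1$ to one of $\tilde H_1 \cap Z(\tilde K_1)$; there the bookkeeping for the covering homomorphism $\tilde G \to G$ and for the stabilizers is the main obstacle, which the direct dimension argument avoids.
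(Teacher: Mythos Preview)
Your argument is correct and follows the same mechanism as the paper: exhibit a non-trivial element of $Z(G)\cap H$ from the hypothesis $H_1\cap Z_1\neq 1$, contradicting the core-freeness of the stabilizer. The paper's own proof is literally the one line ``Lemma \ref{lemma3} yields'', i.e.\ the last two sentences of the proof of Lemma \ref{lemma3} are meant to be re-read as the proof of the corollary.

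The one genuine addition you make is the dimension count giving $\rho(H_1)=K_2$. In Lemma \ref{lemma3} this equality is obtained from Scheerer's structural result under the simply connected hypothesis, whereas the corollary is stated for an arbitrary $K_1$ and $K_2$ with $\rho$ only assumed to be a monomorphism into $K_2$; your argument fills precisely the gap that the paper's one-line proof leaves implicit. The alternative route via the universal cover that you sketch at the end is indeed closer to what ``Lemma \ref{lemma3} yields'' must literally mean, but as you note the bookkeeping is heavier and your direct argument is cleaner.
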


\begin{Coro} \label{coro2}
There does not exist a connected topological proper loop $L$ homeomorphic to a covering of a product $K_1$ of the groups 
$SO_3(\mathbb R)$ and 
having a compact semisimple Lie group
$G=K_1 \times K_2$ as the group  topologically generated by the left translations of $L$.
\end{Coro}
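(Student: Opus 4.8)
The plan is to reduce Corollary \ref{coro2} to Corollary \ref{coro1} by passing to universal covers. Suppose, for contradiction, that $L$ is a proper loop homeomorphic to a covering $K_1$ of a direct product of copies of $SO_3(\mathbb{R})$, with compact semisimple $G=K_1\times K_2$ as the group topologically generated by its left translations and $H$ the stabilizer of $e$. First I would invoke Lemma \ref{simplyconnected}: the universal covering loop $\tilde{L}$ is simply connected, homeomorphic to $\tilde{K_1}$ (the universal cover of $K_1$), which is the direct product $SU_2(\mathbb{R})\times\cdots\times SU_2(\mathbb{R})$ of the simply connected covers of the $SO_3(\mathbb{R})$-factors. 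Since $\tilde{L}$ is still a proper loop and still has a compact Lie group $\tilde{G}$ as the group generated by its left translations, we are in the situation of Lemma \ref{lemma3} with $K_1$ replaced by $\tilde{K_1}$. Thus $\tilde{G}=\tilde{K_1}\times\rho(H_1)$ with $H_1\le \tilde{K_1}$ and $\rho$ a monomorphism, and $\tilde{L}$ is a proper loop homeomorphic to $\tilde{K_1}$ with stabilizer $\tilde{H}=(H_1,\rho(H_1))$.

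The heart of the argument is then to show $H_1\cap Z_1\ne 1$, where $Z_1$ is the centre of $\tilde{K_1}$; once this is established, Corollary \ref{coro1} (applied to $\tilde{G}=\tilde{K_1}\times\rho(H_1)$) gives the contradiction, since it forbids exactly such a proper loop. To see $H_1\cap Z_1\ne 1$, note that $\tilde{K_1}$ is homeomorphic to $\tilde{G}/\tilde{H}$ and hence $\tilde{H}$ is homeomorphic to a closed subgroup of $\tilde{G}$ with $\dim\tilde{H}=\dim\tilde{K_1}-\dim\tilde{K_1}=$ — more precisely, $\dim\tilde{G}=\dim\tilde{K_1}+\dim\tilde{H}$, so $H_1$ has dimension $\dim\tilde{H}=\dim\tilde{K_1}-\dim(\tilde{G}/\tilde{H})+\dim\tilde{H}$; I would instead argue more directly: $\tilde{K_1}$ is a product of $r$ copies of $SU_2(\mathbb{R})\cong S^3$, so its centre $Z_1\cong(\mathbb{Z}/2)^r$ is finite, and $\tilde{H}\cong H_1$ is a connected closed subgroup of $\tilde{K_1}\times K_2$. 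If $H_1\cap Z_1=1$, then $H_1$ would be a connected subgroup of $\tilde{K_1}$ meeting the centre trivially; combined with the product decomposition $\tilde{G}=\tilde{K_1}\times\rho(H_1)$ this would force, via the Remark's construction read in reverse, that $\tilde{L}$ is in fact the group case — but this is where care is needed.

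The cleaner route, which I would actually pursue, is: since $SU_2(\mathbb{R})$ has no proper connected closed subgroup that is semisimple and no two-dimensional closed subgroup, the projections of the connected subgroup $H_1\le\tilde{K_1}=SU_2(\mathbb{R})^r$ onto the factors are each either trivial or all of $SU_2(\mathbb{R})$ or one-dimensional. Analyzing these cases, together with the constraint $\dim\tilde{G}=3r+\dim H_1$ coming from $\tilde{K_1}\simeq\tilde{G}/\tilde{H}$, and the requirement that $H$ contain no nontrivial normal subgroup of $G$, I expect to be forced into a configuration already excluded — for instance, if some projection of $H_1$ is all of an $SU_2(\mathbb{R})$-factor, then that factor's centre lies in $H_1$, giving $H_1\cap Z_1\ne 1$ and hence Corollary \ref{coro1}; and if every projection of $H_1$ is at most one-dimensional, dimension-counting against $\dim K_2=\dim H_1$ together with the semisimplicity of $G$ rules it out, leaving only the case $H_1=1$, which makes $G=K_1$ and $L$ a group, contradicting properness. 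The main obstacle is the bookkeeping in this last case analysis — ensuring that the hypothesis "$G$ semisimple" and "$H$ contains no nontrivial normal subgroup of $G$" are used to eliminate the low-dimensional subgroup possibilities, and confirming that a one-dimensional $H_1$ inside $SU_2(\mathbb{R})^r$ cannot meet $Z_1$ yet still yield a direct product decomposition of the required form.
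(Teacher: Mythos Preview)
Your overall strategy matches the paper's exactly: pass to the simply connected cover so that $K_1 \cong Spin_3(\mathbb{R})^r$, invoke Lemma~\ref{lemma3} to obtain $G = K_1 \times \rho(H_1)$ with stabilizer $H = \{(x,\rho(x)) : x \in H_1\}$, show $H_1 \cap Z_1 \neq 1$, and conclude via Corollary~\ref{coro1}. Where you diverge from the paper is in the third step, and that is where your argument has a genuine gap.

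The paper dispatches that step in one line: \emph{every} nontrivial connected closed subgroup $H_1$ of $Spin_3(\mathbb{R})^r$ meets the centre $Z_1 \cong (\mathbb{Z}/2)^r$ nontrivially. The reason is elementary. Any such $H_1$ contains a one-dimensional torus $S$, and $S$ lies in some maximal torus $T \cong (S^1)^r$ of $K_1$. Writing $S = \{(e^{i m_1 \theta},\ldots,e^{i m_r \theta}) : \theta \in \mathbb{R}\}$ with $\gcd(m_1,\ldots,m_r)=1$, the point $\theta=\pi$ yields $((-1)^{m_1},\ldots,(-1)^{m_r}) \in Z_1$, which is nontrivial since the $m_j$ cannot all be even. (If $H_1=1$ then $G=K_1$ and $L$ is a group, contradicting properness.) This is exactly the fact you flag as the ``main obstacle'' at the end of your proposal; once you have it, no case analysis is needed at all.

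Your attempted case analysis also contains an error worth correcting: from ``some projection of $H_1$ onto an $SU_2$-factor is surjective'' it does \emph{not} follow that $H_1$ contains that factor's centre. The diagonal $\{(g,g):g\in SU_2\}\subset SU_2\times SU_2$ surjects onto each factor yet contains neither $(-I,I)$ nor $(I,-I)$. It does contain the central element $(-I,-I)$, but that is delivered by the torus argument above, not by surjectivity of a projection.
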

\begin{proof}
We may assume that $L$ is simply connected and hence $K_1$ is a direct product of groups isomorphic to $Spin_3(\mathbb R)$.
Then the stabilizer $H$ of $e \in L$ is the subgroup $H=(H_1, \rho (H_{1}))$, where $H_1$ is a subgroup of $K_1$ and $\rho $ is a monomorphism.
The group $G$ topologically generated by the left translations of $L$ has the form
$G=K_1 \times \rho (H_{1})$ (cf. Lemma \ref{lemma3}). The assertion follows from Corollary \ref{coro1} because any subgroup $H_1$ intersects the centre of $K_1$ non trivially.
\end{proof}

\begin{Theo} \label{7sphere} Let $L$ be a topological loop  homeomorphic to the $7$-sphere or to the $7$-dimensional real projective space such that the group $G$ topologically generated by the left translations of $L$ is a compact Lie group. Then $L$ is one of the two $7$-dimensional compact Moufang loops, $G$ is locally isomorphic to $PSO_{8}(\mathbb R)$ and the stabilizer $H$ of $e \in L$ is isomorphic to $SO_{7}(\mathbb R)$.
\end{Theo}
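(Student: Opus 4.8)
\medskip
\noindent\emph{Proof idea.}
The plan is to combine the classification of transitive actions on $S^{7}$ with a homotopy-theoretic obstruction. Recall from Section~2 that a compact topological loop $L$ with compact connected Lie left translation group $G$ and stabiliser $H=G_{e}$ yields a \emph{continuous section} $\sigma\colon G/H\to G$ of the bundle projection $\pi\colon G\to G/H$, that $H$ contains no non-trivial normal subgroup of $G$, so that $G$ acts effectively and transitively on $G/H\cong L$, and that $\dim G=7+\dim H$. Such a continuous section makes $\pi_{\ast}\colon\pi_{k}(G)\to\pi_{k}(G/H)$ surjective for every $k$, which by the homotopy exact sequence of the bundle $H\to G\to G/H$ is equivalent to the vanishing of the connecting homomorphism $\partial\colon\pi_{7}(G/H)\to\pi_{6}(H)$. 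First I would reduce the case $L\cong\mathbb{RP}^{7}$ to the case $L\cong S^{7}$: by Lemma~\ref{simplyconnected} the universal covering loop $\tilde L$ is homeomorphic to $S^{7}$, its left translation group is again a compact Lie group (a finite covering of $G$), and $L=\tilde L/N$ with $N$ central of order $2$; so it suffices to treat $L\cong S^{7}$ and afterwards to factor out the unique central subgroup of order $2$ of the loop obtained.

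Assume then $L\cong S^{7}$. By the classification of transitive effective actions of compact connected Lie groups on $S^{7}$ (Montgomery--Samelson, Borel; see also \cite{loops}) the pair $(G,H)$ is, up to equivalence, one of
\begin{gather*}
(SO_{8}(\mathbb R),SO_{7}(\mathbb R)),\quad (Spin_{7}(\mathbb R),G_{2}),\quad (SU_{4}(\mathbb C),SU_{3}(\mathbb C)),\quad (U_{4}(\mathbb C),U_{3}(\mathbb C)),\\
(Sp_{2}(\mathbb H),Sp_{1}(\mathbb H)),\quad (Sp_{2}(\mathbb H)\cdot U_{1}(\mathbb C),U_{2}(\mathbb C)),\quad (Sp_{2}(\mathbb H)\cdot Sp_{1}(\mathbb H),SO_{4}(\mathbb R)).
\end{gather*}
From the standard tables one has $\pi_{6}(SO_{7}(\mathbb R))=0$, while $\pi_{6}(G_{2})=\mathbb Z_{3}$, $\pi_{6}(SU_{3}(\mathbb C))=\pi_{6}(U_{3}(\mathbb C))=\mathbb Z_{6}$, $\pi_{6}(Sp_{1}(\mathbb H))=\pi_{6}(U_{2}(\mathbb C))=\mathbb Z_{12}$ and $\pi_{6}(SO_{4}(\mathbb R))=\mathbb Z_{12}\oplus\mathbb Z_{12}$, whereas $\pi_{6}(SO_{8}(\mathbb R))=\pi_{6}(Spin_{7}(\mathbb R))=\pi_{6}(SU_{4}(\mathbb C))=\pi_{6}(U_{4}(\mathbb C))=\pi_{6}(Sp_{2}(\mathbb H))=\pi_{6}(Sp_{2}(\mathbb H)\cdot U_{1}(\mathbb C))=0$ and $\pi_{6}(Sp_{2}(\mathbb H)\cdot Sp_{1}(\mathbb H))=\mathbb Z_{12}$. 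In the homotopy exact sequence the map $\pi_{6}(H)\to\pi_{6}(G)$ is surjective, so $\partial=0$ is equivalent to $|\pi_{6}(H)|=|\pi_{6}(G)|$, and among the pairs listed this occurs only for $(SO_{8}(\mathbb R),SO_{7}(\mathbb R))$. In every other case $\partial\neq0$, so $\pi\colon G\to G/H$ has no continuous section, and hence no loop of the required kind exists.

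Consequently $G$ is locally isomorphic to $PSO_{8}(\mathbb R)$ and $H\cong SO_{7}(\mathbb R)$ (for $L\cong S^{7}$ one even gets $G=SO_{8}(\mathbb R)$). It then remains to recognise the loop itself: every continuous sharply transitive section of $SO_{8}(\mathbb R)/SO_{7}(\mathbb R)$ yields, up to isomorphism, the Moufang loop $\mathcal{O}$ of octonions of norm $1$ (cf. \cite{loops}), so in the projective case the universal covering loop is $\mathcal{O}$ and $L=\mathcal{O}/Z$ with $Z=Z(\mathcal{O})\cong\mathbb Z_{2}$ the only central subgroup of order $2$; this would prove the theorem.

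The homotopy bookkeeping in the middle step is routine once the candidate list is available, and the reduction from $\mathbb{RP}^{7}$ to $S^{7}$ is formal. I expect the genuine difficulty to be the last step --- the rigidity assertion that $SO_{8}(\mathbb R)/SO_{7}(\mathbb R)$ carries, up to loop isomorphism, only one sharply transitive continuous section, forcing $L$ to be $\mathcal{O}$ (respectively $\mathcal{O}/Z$); this is the point where one must use the special geometry of the octonions rather than mere dimension counts and homotopy arguments.
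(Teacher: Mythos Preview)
Your reduction from $\mathbb{RP}^{7}$ to $S^{7}$ and your homotopy-obstruction argument eliminating all transitive pairs on $S^{7}$ except $(SO_{8}(\mathbb R),SO_{7}(\mathbb R))$ are correct; this is essentially Scheerer's result \cite{scheerer} specialised to $S^{7}$, recovered by the $\pi_{6}$-bookkeeping you indicate. So the identification of $G$ and $H$ is fine.

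The genuine gap is exactly where you flag it: the rigidity assertion that every sharply transitive continuous section of $SO_{8}(\mathbb R)\to SO_{8}(\mathbb R)/SO_{7}(\mathbb R)$ gives the octonion loop $\mathcal O$. This does \emph{not} follow from \cite{loops} alone in the way you suggest, and it is precisely the step the paper supplies. The paper's main route bypasses your classification entirely: using Hudson's Proposition~2.4 together with Ascoli's theorem one shows that a compact loop with compact Lie left-translation group carries a left-invariant uniformity, and then Hofmann's Theorem~IX.3.14 in \cite{hofmann} forces a simply connected $S^{7}$-loop with this property to be $\mathcal O$; the identification of $(G,H)$ is then a consequence rather than an input. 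The paper also sketches a second route closer in spirit to yours: once $(G,H)=(SO_{8},SO_{7})$ is known, write $\sigma(xH)=x\,\phi(x)$ with $\phi\colon\mathcal S\to H$, observe that any two left translations of $\mathcal O$ lie in a common $Spin_{3}(\mathbb R)$, and invoke Corollary~\ref{coro2} to see that $\phi$ restricted to each such $Spin_{3}$ is a homomorphism; this makes $L$ diassociative, and then Theorem~16.10 of \cite{loops} yields $L\cong\mathcal O$.

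In short: your strategy reverses the logical order of the paper's main proof (you pin down $(G,H)$ first and then seek the loop, whereas the paper pins down the loop first via uniformity and reads off $(G,H)$), and the step you leave open is not a formality---it needs either the invariant-uniformity theorem of Hofmann or the diassociativity argument via Corollary~\ref{coro2}, not merely a citation of \cite{loops}.
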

\begin{proof}
We may assume that $L$ is simply connected. Since $G$ is a compact Lie group using Proposition 2.4 in  \cite{hudson} and Ascoli's Theorem,  from IX.2.9 Theorem of \cite{hofmann} it follows that the loop $L$ has a left invariant uniformity. Therefore IX.3.14 Theorem in \cite{hofmann} yields that $L$ is the multiplicative loop $\cal O$ of octonions having norm $1$.
Then $G$ is isomorphic to
$SO_{8}(\mathbb R)$ and the stabilizer $H$ of $e \in L$ is isomorphic to $SO_{7}(\mathbb R)$ (cf. \cite{scheerer}).

If $L$ is homeomorphic to the $7$-dimensional real projective space, then the universal covering $\tilde{L}$ of $L$ is a Moufang loop homeomorphic
to the sphere $S^7$. It follows from \cite{hofmann}, p. 216, that the loop $L$ is a factor loop $\tilde{L}/N$, where $N$ is a central subgroup  of $\tilde{L}$ of order 2. Lemma 1.33 in \cite{loops} yields that $L$ is the Moufang loop ${\cal O} /Z$, where $Z$ is the centre of the multiplicative loop of octonions having norm $1$.
\end{proof}

\smallskip
\noindent
If $L$ is a topological loop  homeomorphic to the $7$-sphere and if we assume that the group $G$ topologically generated by the left translations of $L$ is a quasi-simple compact Lie group, then
$G$ is isomorphic to $SO_8(\mathbb R)$ and the stabilizer $H$ of $e \in L$ is isomorphic to $SO_7(\mathbb R)$. This allows us to obtain the assertion of the previous theorem also in the following way.
We identify the set $G/H$ of the left cosets with the set $\cal S$ of the left translations of
the loop $\cal O$. The section
$\sigma: G/H \rightarrow G$ belonging to a topological loop $L$ has the form
$\sigma(x H) = x \phi (x)$, where $x \in \cal S$ and $\phi $ is a continuous map from $\cal S$ to $H$. Since any two elements of $\cal S$  are contained in a subgroup $D$ isomorphic to $Spin_{3}(\mathbb R)$ the restriction of $\phi $  to $D$ is a homomorphism (Corollary \ref{coro2}). Hence $L$ is a  diassociative Lie loop (\cite{hofmann},  IX.6.42)  and  Theorem 16.10 in \cite{loops} yields the assertion of the previous theorem.

\medskip

\begin{Theo} \label{theo1} Let $G$ be a compact Lie group which is the group topologically generated by the left
translations  of a proper topological loop $L$ homeomorphic to a connected semisimple compact Lie group. Then $G$ is a connected semisimple Lie group.
\end{Theo}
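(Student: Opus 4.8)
The plan is to argue by contradiction: assume $G$ has a positive-dimensional central torus and show that the multiplication of $L$ would then be associative, contrary to $L$ being proper.

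First, $G$ is connected, since the left translations $\lambda_a$ depend continuously on $a\in L$, contain $\lambda_e=\mathrm{id}$, lie therefore in $G^{\circ}$, and topologically generate $G$. Next I would reduce to the case that $L$ is simply connected. By Lemma~\ref{simplyconnected} one has $L=\tilde L/N$ with $N$ central and $\tilde L$ simply connected; since $L$ is homeomorphic to a compact semisimple Lie group $K$, which has finite fundamental group, $\tilde L$ is a compact topological loop homeomorphic to the simply connected compact group $\tilde K$, and it is still proper. Its left-translation group $\tilde G$ is a finite covering of $G$, hence again a compact connected Lie group, with $\dim Z(\tilde G)^{\circ}=\dim Z(G)^{\circ}$; so it suffices to treat $\tilde L$, and we may assume $L$ is homeomorphic to a simply connected semisimple compact Lie group $K_1$. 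The homotopy exact sequence of $H\hookrightarrow G\twoheadrightarrow G/H\cong K_1$ then shows $H$ is connected, and since $G\to G/H$ is a principal $H$-bundle admitting the continuous section $\sigma$ it is trivial, so $G$ is homeomorphic to $K_1\times H$. Comparing first rational cohomology and using $H^{1}(K_1;\mathbb{Q})=0$ yields $\dim Z(G)^{\circ}=\dim Z(H)^{\circ}=:r$.

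Now suppose $r\ge 1$ and put $Z_0:=Z(G)^{\circ}\neq 1$. Since $H$ contains no nontrivial normal subgroup of $G$ while every subgroup of $Z(G)$ is normal in $G$, we get $H\cap Z(G)=1$; hence $Z_0$ meets $H$ trivially, acts freely on $L=G/H$ by left translations, and the set $\hat Z_0=\{zH:z\in Z_0\}$ is closed under the loop multiplication, because $\sigma(z_1H)\,z_2H=z_2\,\sigma(z_1H)H=z_2z_1H$ for $z_1,z_2\in Z_0$. Thus $\hat Z_0$ is a connected compact subgroup of $(L,\cdot)$ isomorphic to $Z_0$, and $L/\hat Z_0$ is the homogeneous space of the semisimple compact group $\bar G:=G/Z_0$ by $HZ_0/Z_0\cong H$.

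The crux is to show that $\hat Z_0$ is a normal subloop of $L$ and that $L$ is then forced to be a group, which contradicts properness. I would attack this via Propositions~1.21 and~2.4 of \cite{loops} together with sharp transitivity: $\sigma(L)$ is a transversal not only for $H$ but simultaneously for each conjugate $xHx^{-1}$ --- these being the stabilizers of the points of $L$ --- and this rigidity, combined with $H\cap Z(G)=1$ and the freeness of the $Z_0$-action, should pin the $Z_0$-components of the left translations down to a homomorphism, whence $\hat Z_0$ is normal and the obstruction to associativity disappears, ruling out the semidirect-product behaviour of Section~3. This normality step is the main obstacle and is not formal: a priori $\sigma$ need not descend modulo $Z_0$, equivalently $\hat Z_0$ need not be central in $L$, since that would already require $Z_0\subseteq\sigma(L)$. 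Once it is established, Lemma~\ref{lemma3} applies to the ambient group, now semisimple, and the contradiction is reached.
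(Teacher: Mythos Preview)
Your proposal has a genuine gap precisely where you flag it. Up through the reduction to the simply connected case and the K\"unneth computation $\dim Z(G)^\circ = \dim Z(H)^\circ$, everything is fine. But from there you try to derive a contradiction by showing that $\hat Z_0$ is a normal subloop of $L$ and that this forces $L$ to be associative, and neither implication is established. Sharp transitivity of $\sigma(L)$ on $G/H$ (equivalently on all the conjugates $xHx^{-1}$) does not by itself force the $Z_0$-component of $\sigma$ to be a homomorphism: the section is merely continuous, and nothing in Propositions~1.21 or~2.4 of \cite{loops} supplies that kind of rigidity. Even if $\hat Z_0$ were normal, there is no reason a loop with a normal abelian subgroup must be a group --- the constructions in Section~3 produce proper loops containing normal connected subgroups. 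Your closing appeal to Lemma~\ref{lemma3} does not help either, since that lemma already presupposes that $G$ is semisimple, which is exactly what has to be proved.

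The paper's argument is entirely different and bypasses the loop structure after the same reduction to simply connected $K\cong L$. It writes $G = G'\rtimes T$ with $T$ the central torus and expresses $\sigma(G/H)$ as a graph $\{(x,\alpha(x)):x\in K\}$ for a continuous $\alpha$ into $S\rtimes T$. If $T\neq 1$ then, because $\sigma(G/H)$ generates $G$, some circle projection $\iota_i\pi\alpha:K\to T_i\cong S^1$ is nonconstant; the paper then invokes the Bruschlinsky group $[K,S^1]\cong H^1(K)$ together with the fact that a simply connected semisimple compact Lie group has no degree-one generators in its cohomology ring, hence $H^1(K)=0$, to reach the contradiction. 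In other words, the hypothesis that $L$ is homeomorphic to a \emph{semisimple} compact group enters as a purely topological obstruction ($H^1=0$) to mapping nontrivially into a torus, not through any normality or associativity argument inside $L$. Your identity $\dim Z(G)^\circ=\dim Z(H)^\circ$ is a shadow of this same cohomological fact, but it does not by itself give $\dim Z(G)^\circ=0$, and the loop-theoretic route you propose to close the remaining gap does not go through.
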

\begin{proof} Since $L$ is connected also $G$ is connected. By Hofmann-Scheerer Splitting Theorem 
(cf. \cite{neeb}, p. 474) the group $G$ is isomorphic to a semidirect product $G = G' \rtimes T$, where $G'$ is the semisimple commutator subgroup of $G$  and $T$ is a torus. The group $G'$ is isomorphic to an almost direct product 
$G' = K_{1} \cdots K_m$ of quasi-simple compact Lie  groups.
The loop $L$ is homeomorphic to a connected semisimple compact Lie group $K=K_1 \cdots K_s$ with $s \le m$.
We may assume that $L$ and hence also $K$ is simply connected. Since the universal covering ${\tilde G'}$ of $G'$ is the direct product of $K$ and the universal covering ${\tilde S}$ of $S=K_{s+1} \cdots K_m$, the group $G'$ is the direct product of $K$ and $S$. As $L$ is homeomorphic to the image of the section $\sigma: G/H \rightarrow G$, where $H$ is the stabilizer of $e \in L$, the set $\sigma(G/H)$  has the form $\{(x, \alpha(x)) \}$, where $x \in K$ and $\alpha $ is a continuous mapping from $K$ into
$S \rtimes T$. The group $T = T_1\times \cdots \times T_h$ is the direct product of one-dimensional tori $T_i$. Let $\pi$ be the projection from $S\rtimes T$ into $T$ along $S$ and 
$\iota_i$ be the projection from $T$ into $T_i$ along the complement 
$\prod_{j\neq i} T_j$. As  $\sigma(G/H)$ is a compact connected homogeneous space and $T_i$ is a $1$-sphere for all $i$ any   $\iota_i \pi \alpha (K)$ is either constant or surjective. Since 
$\sigma(G/H)$ generates $G$ there exists one $i$ such that $\iota_i \pi \alpha (K)$ is different from $\{1 \}$. As the group $T$ is the direct product of $1$-dimensional tori $T_i$  the Bruschlinsky group $B$ (cf. \cite{hu}, p. 47) of $K$ is not trivial. By Theorem 7.1 in \cite{hu}, p. 49, $B$ is isomorphic to the first cohomology group $H^{1}(K)$. The graded cohomology algebra of the compact Lie group $K$ is the tensor product of the cohomology algebras $H^{1}(F_i)$ of the quasi-simple factors $F_i$ of $K$. Since $H^{1}(F_i)$
has no generators of degree $1$ and $2$ (\cite{gorbatsevich}, pp. 126-127) also the cohomology algebra $H^{1}(K)$ has no generators of degree $1$ and $2$. Hence the Poincare polynomial $\psi (K)$ has no linear and quadratic monomials, which is a contradiction.
\end{proof}

\medskip
\noindent
\Rem  In contrast to the previous theorem a non semisimple compact Lie group may be the  group topologically generated by the left translations of a loop $L$  if $L$ is homeomorphic to a  non semisimple compact connected Lie group.\\
Let $T_1$ be a torus of dimension $m\geq 1$, let $P$ be a connected semisimple compact Lie group and let $T_2$ be a torus of dimension $s$ with 
$1\leq s \leq m$ such that there exists a monomorphism 
$\varphi :T_2 \to P$ with $\varphi (T_2) \cap Z(P)=\{1 \}$, where $Z(P)$ is the centre of $P$. If $g:T_1 \to T_2$ is a  continuous surjective mapping with $g(1)= 1$ which is not a homomorphism, then with the subgroup 
$H=\{(1,\varphi (x),x); x \in T_2\}$ of $G = T_1 \times P \times T_2$ as the stabilizer there exists according to Section 3 a proper connected loop $L$ homeomorphic to $T_1 \times P$ having the direct product $T_1 \times P \times T_2$ as the group topologically generated by the left translations of $L$.

\begin{Theo} \label{quasisimple} There does not exist any proper topological loop which is homeomorphic to a connected quasi-simple Lie group and has a compact Lie group as the group topologically generated by its left translations.
\end{Theo}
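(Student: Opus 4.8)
The plan is to assume, for contradiction, that a proper topological loop $L$ homeomorphic to a connected quasi-simple compact Lie group exists and has a compact Lie group $G$ as the group topologically generated by its left translations. First I would pass to the universal covering loop $\tilde L$: by Lemma~\ref{simplyconnected} we have $L\cong\tilde L/N$ with $N$ central, and since a central quotient of a group is again a group, $\tilde L$ must itself be proper; moreover $\tilde L$ is homeomorphic to the universal covering of the given group, which is a simply connected quasi-simple compact Lie group $K_1$. So we may assume $L$ is simply connected and proper and is homeomorphic to $K_1$. Since $L$ is connected, so is $G$, and by Theorem~\ref{theo1} the group $G$ is a connected semisimple compact Lie group. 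Then Lemma~\ref{lemma3} applies: $G=K_1\times\rho(H_1)$ and the stabilizer of $e\in L$ is $H=\{(x,\rho(x)):x\in H_1\}$, where $\rho$ is a monomorphism, $H_1$ is a closed subgroup of $K_1$ with $H_1\cap Z_1=\{1\}$ ($Z_1$ the centre of $K_1$), and $H_1$ is connected because $\pi_1(K_1)=1$ forces $\pi_0(H)=1$.

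It suffices to show $H_1=\{1\}$; for then $H=\{1\}$, $G=K_1$, the projection $G\to G/H$ is the identity, its unique section fixing $1$ is $\sigma=\mathrm{id}_G$, and the loop multiplication $xH\cdot yH=\sigma(xH)\,yH=xy$ makes $L=G$ a group, contradicting that $L$ is proper. (Equivalently, $\rho(H_1)$ is a direct factor of the semisimple group $G$ and hence the product of the quasi-simple factors of $G$ other than $K_1$, so $H_1=\{1\}$ exactly when $G$ is quasi-simple.) To get a tractable model I would use the map $\theta:G=K_1\times\rho(H_1)\to K_1$, $\theta(a,b)=a\,\rho^{-1}(b)^{-1}$. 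It is not a homomorphism, but $\theta|_H\equiv1$, so $\theta$ is constant on the cosets of $H$ and descends to a continuous bijection, hence a homeomorphism, $\bar\theta:G/H\to K_1$ with $\bar\theta=\theta\circ\sigma$. A direct computation then gives $\sigma((a,1)H)=(a\,F(a),\,\rho(F(a)))$ for a unique continuous $F:K_1\to H_1$ with $F(1)=1$, and transporting the loop operation of $L$ to $K_1$ along $\bar\theta$ yields
\[ a*b=a\,F(a)\,b\,F(a)^{-1}. \]
A further short computation, using $H_1\cap Z_1=\{1\}$, shows that $*$ is associative if and only if $F(a*b)=F(a)F(b)$ for all $a,b\in K_1$.

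In this language the statement to be proved is: if $*$ is a loop and the set $\sigma(G/H)=\{(a\,F(a),\rho(F(a))):a\in K_1\}$ topologically generates $G$, then $F$ is a homomorphism $K_1\to H_1$. This is the main obstacle, and it is where the sharp transitivity of $\sigma$ — equivalently, the requirement that every $\rho_a:x\mapsto x\,F(x)\,a\,F(x)^{-1}$ be a bijection of $K_1$, so that no two distinct elements of $\sigma(G/H)$ differ multiplicatively by a nontrivial element of a conjugate of $H$ — must be used essentially; the generation condition is needed to discard the degenerate possibilities (e.g.\ $F\equiv1$, for which $\sigma(G/H)=K_1\times\{1\}$ does not generate $G$, or $F(a)=a^{-1}$ when $H_1=K_1$, for which $*$ is the opposite group multiplication but $\sigma(G/H)$ again fails to generate $G$). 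Concretely I would first exploit that $\rho_1=\mathrm{id}$ and that $K_1$ is connected — so $\{\rho_a\}$ is a deformation of the identity and a degree/index argument controls the solutions of $x\,F(x)\,a\,F(x)^{-1}=b$ — to force $F$ to be a continuous homomorphism, and then observe that a nontrivial homomorphism from the quasi-simple group $K_1$ into its subgroup $H_1$ with $H_1\cap Z_1=\{1\}$ cannot exist: its image would be a connected quasi-simple subgroup of dimension $\dim K_1$, forcing $H_1=K_1$ and hence $Z_1=\{1\}$, after which $F$ would be an automorphism of $K_1$ and one checks (already for inner $F$) that some $\rho_a$ fails to be injective. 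Thus $F\equiv1$, whence $\sigma(G/H)=K_1\times\{1\}$ can generate $G$ only if $\rho(H_1)=\{1\}$, i.e.\ $H_1=\{1\}$, and we conclude as above. Should this route prove awkward, an alternative is to restrict $L$ to a suitable subloop so as to reach a configuration $(K_1\times\rho(H_1),\,(H_1,\rho(H_1)))$ to which Corollary~\ref{coro1} applies directly.
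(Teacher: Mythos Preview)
Your reduction to the simply connected case and the structural setup via Lemma~\ref{lemma3} are correct, and your reformulation $a*b=aF(a)bF(a)^{-1}$ with the equivalence ``$*$ associative $\Leftrightarrow F(a*b)=F(a)F(b)$'' is accurate. The problem is the step you yourself flag as ``the main obstacle'': you never actually prove that sharp transitivity forces $F$ to be a homomorphism. Saying that $\{\rho_a\}$ is a deformation of the identity and invoking an unspecified ``degree/index argument'' is not a proof; bijectivity of each $\rho_a:x\mapsto xF(x)aF(x)^{-1}$ is a pointwise condition, and there is no evident mechanism by which degree considerations on $K_1$ convert it into the multiplicative identity $F(ab)=F(a)F(b)$ (or even $F(a*b)=F(a)F(b)$). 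Your endgame is also shaky: for $K_1$ simply connected with trivial centre (e.g.\ $G_2$, $F_4$, $E_8$) the case $H_1=K_1$ and $F$ an automorphism is not excluded by centre considerations, and the claim that ``some $\rho_a$ fails to be injective'' for inner $F$ is asserted without argument. The proposed fallback via Corollary~\ref{coro1} cannot help either, since Lemma~\ref{lemma3} already gives $H_1\cap Z_1=\{1\}$, which is precisely the hypothesis under which Corollary~\ref{coro1} says nothing.

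The paper closes this gap by a completely different route that you do not touch: it exploits the low-dimensional rigidity results. Restricting $\sigma$ to any $1$-dimensional torus $A\subset K_1$ gives, by Lemma~\ref{1dimcompact}, a group, hence $\sigma(A)=\{(u,u^n)\}$ for some integer $n$ depending on $A$; the condition $\sigma(A)\cap H=\{1\}$ forces $n\in\{0,2\}$ when $A\subset H_1$. Restricting to a $3$-dimensional subgroup $C_1\subset H_1$ and using Corollary~\ref{coro2} (every compact $3$-dimensional loop of this type is a group) rules out $n=2$ on its tori, so $\sigma(C_1)=(C_1,1)$; since such $C_1$ cover $H_1$, one gets $\sigma(H_1)=(H_1,1)$. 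The same $3$-dimensional argument applied to arbitrary tori of $K_1$ shows that on each torus $\sigma$ is either $(x,1)$ or $(x,x^{\pm 1})$. Collecting the tori of each type yields two subgroups $B\supset H_1$ and $F\cong H_1$ of $K_1$ with $K_1=B\cdot F$ and $B\cap F$ discrete, contradicting Onishchik's theorem that a quasi-simple compact group admits no nontrivial factorization with discrete intersection. The missing idea in your proposal is precisely this use of Lemma~\ref{1dimcompact} and Corollary~\ref{coro2} to force $\sigma$ to be a homomorphism on every small subgroup, which replaces your undeveloped degree argument.
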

\begin{proof}
By Lemma \ref{simplyconnected}  we may assume that $L$ is a proper loop  homeomorphic to a simply connected quasi-simple compact Lie group $K_1$. Then the stabilizer $H$ of $e \in L$ has the form
$H = (H_{1},\rho (H_{1}))=\{ (x, \rho(x)); x \in H_1\}$, where $\rho$ is a monomorphism and the group $G$ topologically generated by the left translations of
$L$ has the form
$G = K_{1} \times  \rho (H_{1})$ (cf. Lemma \ref{lemma3}). Identifying the space $G/H$ with $K_1$ one has that the image
$\sigma (K_1)$ of the section $\sigma: K_1 \rightarrow G$ intersects  $H$ trivially.
As $\rho$ is a monomorphism we may assume that $H = (H_1, H_1) = \{(x, x); x \in H_1\}$.
The restriction of  $\sigma $ to a one-dimensional torus subgroup $A$
of $H_1$ yields $\sigma (A)  = \{(u, f(u))\}$, where $f$ is a continuous function. Since the compact loop $\sigma(A)$ is a group
(cf. Lemma \ref{1dimcompact}) the map $f$ is a homomorphism. It follows that $\sigma (A)$ has the form  $\{(u, u^n)\}$ with fixed
$n\in \mathbb Z$. Since $\{(u, u^n); u \in A \cong SO_2(\mathbb R)\} \cap H = \{ 1 \}$ the equation $x^n=x$, $x \in A$, can be satisfied only for $x=1$. Equivalently, $x \mapsto x^{n-1}$ is an  automorphism of $SO_2(\mathbb R)$. Besides the identity the only non-trivial automorphism of the group $SO_2(\mathbb R)$ is the map
$x \mapsto x^{-1}$. Therefore we get $n\in \{0, 2 \}$.
Let $C_1$ be a $3$-dimensional  subgroup of $H_1$. Since any $3$-dimensional compact loop which has a compact Lie group as the group topologically generated by its left translations is a group (cf. Corollary \ref{coro2}) 
$C=\sigma (C_1)= (C_1, \psi (C_1))$ is locally isomorphic to $SO_3(\mathbb R)$ and $\psi $ is a homomorphism of $C_1$. Besides a homomorphism with finite kernel any continuous homomorphism is an automorphism induced by a conjugation with elements of the orthogonal group $O_{3}(\mathbb R)$. Hence for no $1$-dimensional subgroup $A$ of 
$C_1$ one can have $\sigma (A)=\{(x, x^2), x \in A \}$. As in compact groups the exponential map is surjective the compact group $C$ is the union of the one-dimensional connected subgroups $\sigma (A)=\{(x,1), x \in A \}$. Hence $C$ has the form $(C_1,1)$. Since the $3$-dimensional subgroups of $H_1$ covers $H_1$ 
(cf. \cite{hofmann4}, Propositions 6.45 and 6.46) for the continuous section $\sigma $ one has 
$\sigma (H_1)=(H_1,1)$. 

Let $B_i$ be a one-dimensional torus subgroup of $K_1$ such that $\sigma (B_i) = (B_i, 1)$. The union $B =\bigcup B_i$ of the one-dimensional subgroups of $K_1$ forms a subgroup of 
$K_1$ containing $H_1$.

Let $F_i$ be a $1$-dimensional torus subgroup of $K_1$ such that $\sigma (F_i) \neq (F_i,1)$. Then one has $\sigma (F_i)=\{ (x, x^n); x \in F_i \}$, where 
$n \in \mathbb Z \setminus \{ 0 \}$. Since any $1$-dimensional subgroup of $K_1$ is contained in a $3$-dimensional subgroup of $K_1$ locally isomorphic to $SO_3(\mathbb R)$ 
(cf. \cite{hofmann4}, Propositions 6.45 and 6.46) and by Corollary \ref{coro2} any $3$-dimensional loop homeomorphic to a cover of $SO_3(\mathbb R)$ is a group, besides a homomorphism with finite kernel we get that 
$x \mapsto x^n$ is either an isomorphism or an anti-isomorphism of $SO_3(\mathbb R)$.  Hence one has $n=1$ or $-1$. It follows that either
$\sigma (F_i)=\{ (x, x); x \in F_i \}$ or $\sigma (F_i)=\{ (x, x^{-1}); x \in F_i \}$ for any $1$-dimensional torus subgroup $F_i$ of $K_1$ such that $\sigma (F_i) \neq (F_i,1)$. The union $F= \cup F_i$ of the $1$-dimensional torus subgroups $F_i$ of $K_1$ with $\sigma (F_i) \neq (F_i,1)$ is isomorphic to the group $\rho (H_1) \cong H_1$. 

The subgroups $F$ and $B$ yield a factorization of $K_1$ such that the intersection $F \cap B$ is discrete  which is a contradiction to the fact that $K_1$ is quasi-simple
(cf. Theorem 4.6 in \cite{gorbatsevich}, p. 145).
 \end{proof}

\begin{Coro} Let $L$ be a proper topological loop homeomorphic to a product of quasi-simple simply connected compact  Lie groups and having a compact Lie group $G$ as the group topologically generated by its left translations. Then $G$ is at least $14$-dimensional.\\
If $dim G = 14$, then $G$ is locally isomorphic to $Spin_3(\mathbb R) \times SU_3(\mathbb C) \times Spin_3(\mathbb R)$ and $L$ is homeomorphic to a group which is locally isomorphic to $Spin_3(\mathbb R) \times SU_3(\mathbb C).$
\end{Coro}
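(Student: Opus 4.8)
The plan is to combine the structural results already established with a finite case analysis. By Theorem \ref{theo1} the group $G$ is a connected semisimple compact Lie group, and a product of quasi-simple simply connected compact Lie groups is such a group and is simply connected, so Lemma \ref{lemma3} applies with $K_1$ the group $L$ is homeomorphic to: one gets $G = K_1 \times \rho(H_1)$ with $\rho$ a monomorphism, $H = (H_1, \rho(H_1))$, and $H_1 \le K_1$ a connected subgroup with $H_1 \cap Z(K_1) = \{1\}$; thus $\dim G = \dim K_1 + \dim H_1$. Since $G$, hence the direct factor $\rho(H_1) \cong H_1$, is connected and $L$ is proper, $H_1 \neq \{1\}$, so $\dim H_1 \geq 1$; and by Theorem \ref{quasisimple}, $K_1$ is not quasi-simple, hence is a direct product of $s \geq 2$ quasi-simple simply connected compact Lie groups, so $\dim K_1 \geq 6$. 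Finally, the argument in the proof of Theorem \ref{quasisimple}, whose concluding contradiction is its only use of the quasi-simplicity of $K_1$, produces connected subgroups $F$ and $B$ of $K_1$ with $F \cap B$ discrete, $F \cong H_1$, $H_1 \le B$ and $K_1 = FB$; hence $\dim K_1 = \dim F + \dim B$ and, as $H_1 \le B$, $\,2\dim H_1 = 2\dim F \leq \dim F + \dim B = \dim K_1$.

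Next I would note that $B$ is a proper closed connected subgroup of the compact semisimple group $K_1$ of codimension $\dim H_1$, and that a compact semisimple Lie group has no closed subgroup of codimension $1$: such a subgroup would give a fibration over the $1$-sphere with compact fibre, forcing the fibre to have infinitely many components. Hence $\dim H_1 \geq 2$, so $\dim G \geq \dim K_1 + 2 \geq 14$ whenever $\dim K_1 \geq 12$. The two smallest quasi-simple simply connected compact Lie groups being $Spin_3(\mathbb R)$ ($3$-dimensional) and $SU_3(\mathbb C)$ ($8$-dimensional), the products of $s \geq 2$ such groups of dimension at most $11$ are exactly $Spin_3(\mathbb R)^2$ (dimension $6$), $Spin_3(\mathbb R)^3$ (dimension $9$) and $Spin_3(\mathbb R) \times SU_3(\mathbb C)$ (dimension $11$), and these are all that remain for the bound.

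For each of these $K_1$ I would pass to the Lie algebra $\mathfrak{k}_1 = \mathfrak f \oplus \mathfrak b$, where $\mathfrak h_1 \subseteq \mathfrak b$ is the Lie algebra of $H_1$, $\mathfrak f \cong \mathfrak h_1$ and $\dim \mathfrak f = \dim H_1$. Using the reductive decomposition $\mathfrak b = [\mathfrak b, \mathfrak b] \oplus Z(\mathfrak b)$, the facts that $\mathfrak{su}_2$ has no two-dimensional subalgebra, that a diagonally embedded $\mathfrak{su}_2$ inside $\mathfrak{su}_2^3$ or inside $\mathfrak{su}_3$ has too small a centraliser, and that a compact semisimple Lie algebra has no codimension-$1$ subalgebra, one lists the admissible $\mathfrak b$ of the prescribed dimension. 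In every case one of two contradictions occurs: either $\mathfrak b$ contains all of $\mathfrak{k}_1$ apart from a product of $\mathfrak{su}_2$-factors, so that $\mathfrak f$ must project injectively into one such factor and is therefore a subalgebra of $\mathfrak{su}_2$ of dimension $\geq 2$, which is impossible; or every admissible $H_1 \le B$ of the prescribed dimension meets $Z(K_1)$ nontrivially — its identity component being a maximal torus of $K_1$, or its semisimple part a full $Spin_3(\mathbb R)$-factor of $K_1$ or a twisted diagonal of such factors, in each case capturing a nontrivial central element — contradicting $H_1 \cap Z(K_1) = \{1\}$. This excludes $K_1 = Spin_3(\mathbb R)^2$ and $K_1 = Spin_3(\mathbb R)^3$ completely, and for $K_1 = Spin_3(\mathbb R) \times SU_3(\mathbb C)$ it rules out $\dim H_1 \in \{1, 2\}$, so $\dim H_1 \geq 3$ and $\dim G \geq 14$. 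The tracking of the diagonal embeddings in $\mathfrak{su}_2^3$ and of which order-$2$ elements are central in $K_1$ is the part requiring most care.

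For the equality statement, the same two obstructions applied to the further small cases $\dim K_1 = 12$ ($Spin_3(\mathbb R)^4$) and $\dim K_1 = 13$ ($Spin_3(\mathbb R) \times Spin_5(\mathbb R)$) give $\dim G > 14$ there, so $\dim G = 14$ forces $\dim K_1 = 11$ and $\dim H_1 = 3$, i.e. $K_1 = Spin_3(\mathbb R) \times SU_3(\mathbb C)$. Then $\dim \mathfrak b = 8$ leaves only $\mathfrak b = \{0\} \oplus \mathfrak{su}_3$, so $B = \{1\} \times SU_3(\mathbb C)$ and $H_1$ is a three-dimensional connected subgroup of $SU_3(\mathbb C)$, while $\mathfrak f$, being complementary to $\{0\} \oplus \mathfrak{su}_3$, maps isomorphically onto the first summand $\mathfrak{su}_2$, so $F$ maps isomorphically onto the simply connected factor $Spin_3(\mathbb R)$ and hence $F \cong Spin_3(\mathbb R)$; therefore $H_1 \cong Spin_3(\mathbb R)$ — realised inside $SU_3(\mathbb C)$ by a block $SU_2(\mathbb C)$, which meets $Z(K_1)$ trivially. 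Consequently $G = \bigl(Spin_3(\mathbb R) \times SU_3(\mathbb C)\bigr) \times \rho(H_1)$ is locally isomorphic to $Spin_3(\mathbb R) \times SU_3(\mathbb C) \times Spin_3(\mathbb R)$ and $L$ is homeomorphic to $K_1 = Spin_3(\mathbb R) \times SU_3(\mathbb C)$. That this dimension is attained follows from the construction of Section $3$ applied to $G_1 = Spin_3(\mathbb R)$, $G_2 = SU_3(\mathbb C)$ and $G_3 = SU_2(\mathbb C) \le SU_3(\mathbb C)$, which meets $Z(SU_3(\mathbb C))$ trivially, together with a suitable continuous map $g : Spin_3(\mathbb R) \to SU_2(\mathbb C)$ with $g(1) = 1$ that is not a homomorphism.
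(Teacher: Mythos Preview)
Your approach is substantially more elaborate than the paper's and overlooks two shortcuts that make the bound immediate. First, since $G = K_1 \times \rho(H_1)$ is semisimple by Theorem \ref{theo1}, the direct factor $\rho(H_1) \cong H_1$ is itself a nontrivial connected semisimple compact Lie group, so $\dim H_1 \geq 3$ without any further work --- you obtain only $\dim H_1 \geq 1$ from properness and then $\geq 2$ via the codimension-one argument. Second, Corollary \ref{coro2} already rules out $K_1 \cong Spin_3(\mathbb R)^s$ for every $s$, which is exactly what your Lie-algebra case analysis for $Spin_3(\mathbb R)^2$ and $Spin_3(\mathbb R)^3$ re-derives by hand. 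With these two facts the minimal admissible $K_1$ is $Spin_3(\mathbb R) \times SU_3(\mathbb C)$ and $\dim G \geq 11+3 = 14$; the equality case then follows at once, and the paper points to the Section~3 construction (with $H_1$ realised as the block $SU_2(\mathbb C) \le SU_3(\mathbb C)$) for existence.

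There is also a gap in your use of the $F,B$ decomposition from the proof of Theorem \ref{quasisimple}: the step there showing $\sigma(H_1) = (H_1,1)$, and hence $H_1 \leq B$, passes through three-dimensional subgroups $C_1$ of $H_1$, which exist only once $H_1$ is known to be non-abelian. You invoke $H_1 \leq B$ (to get $2\dim H_1 \leq \dim K_1$) before establishing this. The semisimplicity of $\rho(H_1)$ fills the gap --- but once you have it, the $F,B$ machinery and the case analysis are no longer needed for the lower bound. Your equality analysis via $\mathfrak b = \{0\} \oplus \mathfrak{su}_3$ does give a slightly sharper description of how $H_1$ and $F$ sit inside $K_1$ than the paper records.
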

\begin{proof} We assume that the loop $L$ is simply connected. Then $L$ is homeomorphic to the direct product $K_1$ of at least two quasi-simple simply connected factors (cf. Theorem \ref{quasisimple}). According to Theorem \ref{theo1} the connected group $G$ is semisimple. Hence by
Lemma \ref{lemma3} the stabilizer $H$ of $e \in L$ has the form $(H_1, \rho(H_1))=\{ (x, \rho(x)), x \in H_1\}$ and
$G=K_1 \times \rho(H_1)$, where $H_1$ is a subgroup of $K_1$ and $\rho $ is a monomorphism. Since any subgroup of 
$Spin_3(\mathbb R)$ intersects its centre not trivially according to Corollary  \ref{coro2} and to the construction in Section 3 the group $K_1$ coincides with 
$K \times P=Spin_3(\mathbb R) \times SU_3(\mathbb C)$, the subgroup $H_1$ has the form $1 \times Spin_3(\mathbb R)$ and 
$\rho : H_1 \to S$ is an isomorphism. Therefore one has 
$S=Spin_3(\mathbb R)$ and for the function $g: K \to S$ one can choose the function $x \mapsto x^{n}$ with 
$n \in \mathbb Z \setminus \{-1, 0, 1\}$. Hence from the construction in Section 3 we have 
$G=Spin_3(\mathbb R) \times SU_3(\mathbb C) \times Spin_3(\mathbb R)$.
\end{proof}

\medskip
\noindent
\Rem  Euclidean and hyperbolic symmetric spaces correspond to global differentiable loops 
(cf. \cite{loops}, Theorem 11.8, p. 135). In contrast to this, compact simple symmetric spaces which are not Lie groups yield only local Bol loops $L$ since for $L$ the exponential map is not a diffeomorphism 
(cf. \cite{loops}, Proposition 9.19, p. 115).

\bigskip
\noindent
\'Agota Figula, Institute of Mathematics, University of Debrecen,
\newline
H-4010 Debrecen, P.O.B. 12, Hungary, figula@science.unideb.hu

\smallskip
\noindent
Karl Strambach, Universit\"at Erlangen-Nürnberg, Department Mathematik,
\newline
Cauerstrasse 11, D-91058 Erlangen, Germany, stramba@math.fau.de

\end{document}